\documentclass{amsart}
\usepackage[utf8]{inputenc}
\usepackage{amsmath,amssymb,amsfonts,amsthm,enumerate,booktabs}
\title[Curvatures of left invariant Randers metric]{Curvatures of left invariant Randers metric on the five-dimensional Heisenberg group}
\author{A.~Lengyeln\'e-T\'oth} 
\author{Z.~Kov\'acs}
\date{\today}
\address{Corresponding author: Z.~Kov\'acs, University of Ny\'iregyh\'aza,  4400 Ny\'iregyh\'aza, S\'ost\'oi \'ut 31/b}
\email{kovacs.zoltan@nye.hu}
\usepackage{graphicx}
\subjclass[2010]{53B40}
\keywords{Randers metric, Heisenberg group, Chern--Rund connection, flag curvature}

\newtheorem*{theoremm}{Theorem}

\theoremstyle{definition}
\newcommand{\Span}{\operatorname{span}}
\dedicatory{Dedicated to Professor P\'eter T.~Nagy on the occassion of his seventieth birthday}
\begin{document}
\begin{abstract}
A left invariant $Z$-Randers metric on the five-dimensional Heisenberg group is a left invariant Randers metric with deformation vector from the center of the Heisenberg algebra.
In this note we prove that for every left invariant $Z$-Randers metric on the five-dimensional Heisenberg group there exist flags of strictly negative and there exist flags of strictly positive curvatures.
\end{abstract}
\maketitle

\section{Introduction}
The geometry of any Lie group $N$ with left invariant Riemann metric reflects strongly the algebraic structure of its Lie algebra $\mathcal N$. Many of the results in \cite{MR1296558} illustrate this principle. We mention here just two well-known theorems:
\begin{theoremm}[Milnor \cite{MR0425012}]
If $Z$ belongs to the center of the Lie algebra $\mathcal N$, then for any left invariant metric the inequality for the sectional curvature $K(Z,X)\geq 0$ is satisfied for all $X$.
\end{theoremm}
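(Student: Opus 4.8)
The plan is to reduce the inequality to a one-line computation with the Levi-Civita connection of the left invariant metric, written algebraically via the Koszul formula, exploiting that $\operatorname{ad}_Z=0$ since $Z$ lies in the center of $\mathcal N$. It suffices to work with left invariant vector fields and to evaluate everything at the identity, because a left invariant metric is homogeneous and so its curvature is determined by its value at one point; and one may assume that $X$ is not proportional to $Z$, since otherwise $\Span\{Z,X\}$ is not a $2$-plane and $K(Z,X)$ is undefined.

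For left invariant fields the directional-derivative terms in the Koszul formula drop out (inner products of left invariant fields are constant), leaving
\[
2\langle\nabla_XY,W\rangle=\langle[X,Y],W\rangle-\langle[Y,W],X\rangle+\langle[W,X],Y\rangle .
\]
Putting $Y=Z$ and using $[Z,\cdot]=0$ gives $2\langle\nabla_XZ,W\rangle=\langle[W,X],Z\rangle$ for every left invariant $W$; in particular $\nabla_ZZ=0$, and, since $\nabla$ is torsion free with $[X,Z]=0$, also $\nabla_ZX=\nabla_XZ=:U$. With the convention $R(X,Y)W=\nabla_X\nabla_YW-\nabla_Y\nabla_XW-\nabla_{[X,Y]}W$, the relations $\nabla_ZZ=0$ and $[X,Z]=0$ yield $R(X,Z)Z=-\nabla_ZU$. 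Pairing with $X$ and using metric compatibility together with the constancy of $\langle U,X\rangle$,
\[
\langle R(X,Z)Z,X\rangle=-\langle\nabla_ZU,X\rangle=\langle U,\nabla_ZX\rangle=\langle U,U\rangle=|\nabla_XZ|^2\ge 0 .
\]
Therefore
\[
K(Z,X)=\frac{\langle R(X,Z)Z,X\rangle}{|Z|^2|X|^2-\langle Z,X\rangle^2}\ge 0 ,
\]
the denominator being strictly positive for linearly independent $Z$ and $X$.

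I do not expect a real obstacle here; the only care needed — and in a sense the crux — is to fix the sign conventions for $R$ and $K$ consistently and to justify that on left invariant fields the Koszul formula turns $\nabla$ into an algebraic operation on $\mathcal N$. It is worth recording, with the rest of the paper in mind, that the argument pins down the flat directions exactly: $K(Z,X)=0$ if and only if $\nabla_XZ=0$, that is, $\langle[W,X],Z\rangle=0$ for all $W$. This equality case is the delicate point to watch when the same section is reexamined for a $Z$-Randers deformation, where the perturbed flag curvature need no longer be nonnegative.
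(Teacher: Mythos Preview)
Your argument is correct and is essentially Milnor's original proof: reduce to left invariant fields, use the algebraic Koszul formula with $\operatorname{ad}_Z=0$ to get $\nabla_ZZ=0$ and $\nabla_ZX=\nabla_XZ$, and conclude $\langle R(X,Z)Z,X\rangle=\lvert\nabla_XZ\rvert^2\ge 0$.

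There is, however, nothing in the paper to compare it against. The theorem is quoted in the introduction purely as classical background, with a citation to Milnor, and the authors give no proof of it; their own work begins only with the $Z$-Randers metric in Section~3. Your closing remark about the equality case $K(Z,X)=0\Leftrightarrow\nabla_XZ=0$ is a useful addition, and indeed connects to the paper's observation that the $Z$-Randers metric is non-Berwald precisely because $\nabla X_0\ne 0$ for $X_0\in\mathcal Z$.
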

\begin{theoremm}[Wolf \cite{MR0162206}]
Any nonabelian nilpotent Lie group with left invariant metric must admit both positive and negative sectional curvatures.
\end{theoremm}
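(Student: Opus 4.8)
The plan is to reduce the statement to two curvature computations at the level of the metric Lie algebra $\mathcal N$, carried out in a fixed orthonormal basis $\{e_1,\dots,e_n\}$ with structure constants $\langle[e_i,e_j],e_k\rangle$. The elementary remark that organizes the proof is this: if every sectional curvature were nonnegative the scalar curvature $\tau=\sum_{i\neq j}K(e_i,e_j)$ would be nonnegative, and if every sectional curvature were nonpositive then every Ricci curvature $\operatorname{Ric}(x)=\sum_j K(x,e_j)$ would be nonpositive. Hence it is enough to produce one direction of strictly \emph{positive} Ricci curvature together with strictly \emph{negative} scalar curvature. Before the computation I would record the two consequences of nilpotency that do the real work: each $\operatorname{ad}_x$ is a nilpotent endomorphism, so $\operatorname{tr}\operatorname{ad}_x=0$ (equivalently, the mean curvature vector defined by $\langle H,x\rangle=\operatorname{tr}\operatorname{ad}_x$ vanishes, as $N$ is unimodular) and also $\operatorname{tr}\operatorname{ad}_x^2=0$.

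For the negative sign I would derive, from the Koszul formula for the Levi-Civita connection of a left invariant metric, the expression for the Ricci quadratic form, and then delete the mean-curvature term and the $\operatorname{tr}\operatorname{ad}_x^2$ term using the two remarks above, leaving in the nilpotent case
\[
\operatorname{Ric}(x)=-\tfrac12\sum_i|[x,e_i]|^2+\tfrac14\sum_{i,j}\langle[e_i,e_j],x\rangle^2 .
\]
Summing over the basis and using $\sum_{i,k}|[e_k,e_i]|^2=\sum_{i,j,k}\langle[e_i,e_j],e_k\rangle^2$, the two contributions collapse to
\[
\tau=-\tfrac14\sum_{i,j,k}\langle[e_i,e_j],e_k\rangle^2 .
\]
Since $\mathcal N$ is nonabelian the bracket is not identically zero, so the sum is strictly positive and $\tau<0$; by the remark above some plane carries strictly negative sectional curvature.

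For the positive sign I would use the standard structural fact that, for a nonabelian nilpotent Lie algebra, the last nonzero term of the lower central series is a nonzero subspace of $\mathfrak z(\mathcal N)\cap[\mathcal N,\mathcal N]$, and fix a unit vector $z$ in it. Centrality gives $[z,e_i]=0$ for all $i$, so the Ricci formula above collapses to $\operatorname{Ric}(z)=\tfrac14\sum_{i,j}\langle[e_i,e_j],z\rangle^2$, and because $z\in[\mathcal N,\mathcal N]$ is a unit vector this sum is strictly positive, whence $\operatorname{Ric}(z)>0$. By Milnor's theorem every $K(z,e_j)\ge 0$, and since these nonnegative numbers add up to $\operatorname{Ric}(z)>0$, at least one plane through the central vector $z$ has strictly positive sectional curvature. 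Together with the previous paragraph this exhibits both signs.

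The only real obstacle is the bookkeeping in the first step: one must set up the Ricci formula for an arbitrary left invariant metric with correct signs and then be certain that nilpotency removes exactly the mean-curvature and $\operatorname{tr}\operatorname{ad}_x^2$ terms; everything after that is immediate. As a check on the conventions I would verify the formula on the three-dimensional Heisenberg group, where it gives $\tau=-\tfrac12$, together with the known values $K(e_1,e_2)=-\tfrac34$ and $K(e_1,e_3)=K(e_2,e_3)=\tfrac14$, displaying both signs as predicted.
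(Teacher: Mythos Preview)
Your argument is correct. The paper, however, does not prove Wolf's theorem at all: it is quoted in the introduction as a known classical result (with a citation to Wolf's 1964 paper) purely to motivate the paper's own Finsler-geometric analogue for $Z$-Randers metrics on the five-dimensional Heisenberg group. There is therefore no ``paper's proof'' to compare against.

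On its own merits your proof is sound and is essentially the classical one. The Ricci formula you use,
\[
\operatorname{Ric}(x)=-\tfrac12\sum_i\lvert[x,e_i]\rvert^2+\tfrac14\sum_{i,j}\langle[e_i,e_j],x\rangle^2,
\]
is exactly what survives from the general left-invariant Ricci expression once unimodularity kills the mean-curvature term and nilpotency kills the Killing-form term $\operatorname{tr}\operatorname{ad}_x^2$; the trace then gives $\tau=-\tfrac14\sum_{i,j,k}\langle[e_i,e_j],e_k\rangle^2<0$, forcing a negative sectional curvature. For the positive sign, choosing $z$ in the last nonzero term of the lower central series guarantees both $z\in\mathfrak z(\mathcal N)$ and $z\in[\mathcal N,\mathcal N]$, so $\operatorname{Ric}(z)=\tfrac14\sum_{i,j}\langle[e_i,e_j],z\rangle^2>0$, and hence some $K(z,e_j)>0$. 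One small remark: the appeal to Milnor's inequality $K(z,\cdot)\ge 0$ is not actually needed here, since a positive sum already forces a positive summand; but it does no harm. Your Heisenberg sanity check ($\tau=-\tfrac12$, sectional curvatures $-\tfrac34,\tfrac14,\tfrac14$) is correct.
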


 The purpose of this paper is to develop the above results for a special type of five-dimensional Randers spaces. A general study of Berwald-type Randers metric on two-step homogeneous nilmanifolds of dimension five can be found in \cite{MR2807115}, where the author shows that the only space which admits left-invariant Randers metric of Berwald type has three-dimensional center. In that case the author gives explicit formula for the flag curvature, and the sign of the flag curvature is studied. In the present paper we study strictly non-Berwald Randers metrics on the two-step nilmanifold with one-dimensional center (usually called Heisenberg manifold). The three-dimensional case is treated in \cite{MR3231513}. We use basically a local calculus. In \cite{MR3231513} we use the Berwald-Mo\'or frame for the computations (see e.g.~\cite{MR0090088}), while here the Homolya--Kowalski basis is used (see~\cite{MR2267683}).

\section{Preliminaries}
Through this paper we use \cite{MR1747675} as a basic reference for foundations of Finsler geometry.
A Finsler manifold $(N,F)$ is a differentiable manifold $N$ equipped with a Finsler metric $F\colon TN\to \mathbf{R}$.
The Finsler geometry counterpart of the Riemannian sectional curvature is the \emph{flag curvature}, which can be introduced by considering the osculating Riemannian metric
\begin{equation}
\langle X,Y\rangle_W=\left.\frac{1}{2}\frac{\partial^2}{\partial s\partial t}\right|_{s=t=0}F^2(W+sX+tY),
\end{equation}
where $X,Y\in T_xN$, and $W\in T_xN\setminus\{0\}$. A flag $\sigma(W,X)$ consists of the flag pole $W$ and a two-dimensional subspace spanned by $W$ and a nonzero transverse vector $X$. The \emph{flag curvature} for $X\in T_xN$ is defined by
\begin{equation}
K(\sigma(W,X))=K(W,X)=\frac{\langle R^W(X,W)W,X\rangle_W}{\|W\|_W^2\|X\|_W^2-\langle X,W\rangle_W^2},
\label{eq:flag}
\end{equation}
where 
\[
R^W(X,Y)Z=\nabla^W_X\nabla^W_YZ-\nabla^W_Y\nabla^W_XZ-\nabla^W_{[X,Y]}Z
\]
is the curvature of the Chern-Rund connection $\nabla^W$ for $F$. The Chern-Rund connection for the nowhere vanishing vector field $W$ is the torsion free, almost metric affine connection
$\nabla^W\colon \mathfrak{X}(N)\times\mathfrak{X}(N)\to\mathfrak{X}(N)$, 
defined by the generalized Koszul formula
\begin{align}\label{eq:CR}
2\langle{\nabla^W_XY},{Z}\rangle_W&=
X\langle{Y},{Z}\rangle_W+
Y\langle{Z},{X}\rangle_W-
Z\langle{X},{Y}\rangle_W+\\
&\quad +\langle{[X,Y]},{Z}\rangle_W-
\langle{[Y,Z]},{X}\rangle_W+
\langle{[Z,X]},{Y}\rangle_W-\nonumber\\
&\quad-2\langle{\nabla^W_XW},{Y},{Z}\rangle_W-
2\langle{\nabla^W_YW},{Z},{X}\rangle_W+2\langle{\nabla^W_ZW},{X},{Y}\rangle_W,
\nonumber
\end{align}
where 
\[
\langle{X},{Y},{Z}\rangle_W
=\left.\frac{1}{4}\frac{\partial^3}{\partial r\partial s\partial t}
\right|_{r,s,t=0}F^2
\left(
W+rX+sY+tZ
\right)
\]
is the $(0,3)$-type Cartan tensor (\cite{MR2132661}).
`Almost metric' here means
\[
X\langle{Y},{Z}\rangle_W=\langle{\nabla^W_XY},{Z}\rangle_W
+\langle{Y},{\nabla^W_XZ}\rangle_W+
2\langle{\nabla^W_XW},{Y},{Z}\rangle_W.
\]

Hereafter let $N$ be the five-dimensional Heisenberg group, which is up to isomorphism, the only two-step nilpotent Lie group with a 1-dimensional center (\cite{MR2267683}). Let $\mathcal N$ denote the five-dimensional real Lie algebra of $N$, with center $\mathcal Z=\operatorname{span} Z$, spanned by the element $Z$.  We assume that $\mathcal N$ is equipped with the Euclidean scalar product $\langle,\rangle$, and suppose that $\|Z\|=1$.

For $X_0\in\mathcal N $ with property \ $\|X_0\|<1$ the function
\begin{equation}\label{eq:83gcf}
f\colon\mathcal N\to\mathbf R,\ X\mapsto f(X)=
\sqrt{\langle X,X\rangle}+\langle{X_0},{X}\rangle
\end{equation}
defines a Minkowski functional on $\mathcal N$; therefore, it can be extended to a \emph{left-invariant Randers type Finsler metric} $F$ on the Lie group $N$ of $\mathcal N$ by left translations. From now on elements of $\mathcal N$ are regarded as left invariant vector fields on $N$. For left invariant vector fields the first three terms of the right hand side of \eqref{eq:CR} vanish. 

Excluding the case $X_0=0$, the remaining Randers metrics are non-Riemannian \cite[p.~283]{MR1747675}. In this paper $X_0=\xi Z$ for a real number $0<\xi<1$. This choice gives a geometric relationship between the Lie algebra and the Randers metric, and we call such type of Randers metric \emph{$Z$-Randers metric}.  Moreover, the condition $Z\in\mathcal{Z}$ guarantees that the Randers metric is not Berwald. Namely, the Randers metric based on \eqref{eq:83gcf} is Berwald if and only if $X_0$ is parallel with respect to the Levi-Civita connection of $\langle,\rangle$ (see e.g.~\cite[Theorem 3.1.4.1.]{MR1273129}). The Levi-Civita connection has the form
\[
2\langle{\nabla_XY},{Z}\rangle=
\langle{[X,Y]},{Z}\rangle-
\langle{[Y,Z]},{X}\rangle+
\langle{[Z,X]},{Y}\rangle,\ X,Y,Z\in \mathcal{N}.
\]
Thus, for all $U\in \mathcal{N}:\ \nabla_UX_0=0$ if and only if
\[
\forall U\in \mathcal{N}\ \forall V\in\mathcal{N}:\
\langle[U,V],X_0\rangle=0.
\]
Consequently, if $X_0\in\mathcal{Z}$, then the Randers metric is not Berwald.

The osculating scalar product can be calculated from the Euclidean scalar product by 
\begin{multline}\label{eq:scalar}
\langle {U},{V}\rangle_W=\langle {U},{V}\rangle+\langle {X_0},{U}\rangle\langle {X_0},{V}\rangle
-\langle {X_0},{W}\rangle\langle {W},{U}\rangle\langle {W},{V}\rangle
\\
+
\langle {X_0},{U}\rangle\langle {W},{V}\rangle+
\langle {X_0},{W}\rangle\langle {U},{V}\rangle+
\langle {X_0},{V}\rangle\langle {W},{U}\rangle,
\end{multline}
where $W\in\mathcal N$ and $\langle {W},{W}\rangle=1$; moreover, the $(0,3)$-type symmetric Cartan tensor is
\begin{multline}\label{eq:cartan}
\langle {U},{V},{X}\rangle_{W}=\frac{1}{2}\sum_{[U,V,X]}\left\{
\langle{X_0},{W}\rangle\langle{W},{U}\rangle\langle{W},{V}\rangle\langle{W},{X}\rangle\right.\\
-\left.\langle{X_0},{W}\rangle\langle{X},{V}\rangle\langle{U},{W}\rangle-
\langle{X_0},{X}\rangle\langle{W},{V}\rangle\langle{W},{U}\rangle+
\langle{X_0},{U}\rangle\langle{X},{V}\rangle
\right\},
\end{multline}
see \cite{MR3231513}. 

\section{The main theorem}
\begin{theoremm}
For every left invariant $Z$-Randers metric on the five-dimensional Heisenberg group there exist flags of strictly negative and there exist flags of strictly positive curvatures.
\end{theoremm}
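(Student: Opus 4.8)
The plan is to reduce the problem to an explicit computation in a single adapted basis and then to exhibit two concrete flags. Since $[\mathcal N,\mathcal N]=\mathcal Z=\Span Z$, the skew-symmetric endomorphism $j(Z)$ of $\mathcal Z^\perp$ given by $\langle j(Z)U,V\rangle=\langle Z,[U,V]\rangle$ is invertible on the four-dimensional space $\mathcal Z^\perp$, hence can be put in block-diagonal form: there is an orthonormal basis $(e_1,e_2,e_3,e_4)$ of $\mathcal Z^\perp$ for which, with $e_5=Z$, the only nonvanishing brackets are $[e_1,e_2]=\lambda_1 e_5$ and $[e_3,e_4]=\lambda_2 e_5$, $\lambda_1,\lambda_2>0$ (the Homolya--Kowalski basis of \cite{MR2267683}). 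Every left invariant $Z$-Randers metric on $N$ is thereby encoded by the three parameters $\xi\in(0,1)$ and $\lambda_1,\lambda_2>0$, and it suffices to produce, for all admissible values, one flag of strictly positive and one of strictly negative curvature.

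Next I would compute the Chern--Rund connection $\nabla^W$ for a chosen pole $W$ directly from \eqref{eq:CR}. The crucial observation is that the connection enters the right-hand side of \eqref{eq:CR} only through the Cartan-tensor terms $\langle\nabla^W_\bullet W,\cdot,\cdot\rangle_W$, and that Euler's homogeneity identity for $F^2$ forces $\langle W,\cdot,\cdot\rangle_W=0$; hence every such term in which one slot equals $W$ disappears. Taking $X=Y=W$ in \eqref{eq:CR} then determines $\nabla^W_W W$ at once from $\langle\nabla^W_W W,\cdot\rangle_W=-\langle[W,\cdot],W\rangle_W$; substituting this back with $Y=W$ gives $\nabla^W_X W$ for every $X$; and a last substitution yields $\nabla^W_X Y$ for all $X,Y$. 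From $\nabla^W$ the curvature operator $R^W$ and the flag curvature \eqref{eq:flag} follow by inserting the explicit osculating product \eqref{eq:scalar}. The denominator in \eqref{eq:flag} is positive for any genuine flag (Cauchy--Schwarz for the positive-definite $\langle\,,\rangle_W$), so in each case only the sign of the numerator $\langle R^W(X,W)W,X\rangle_W$ has to be decided.

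For the positive flag I would take the pole $W=Z$. By \eqref{eq:scalar} the osculating product $\langle\,,\rangle_Z$ restricts to the Euclidean product on $\mathcal Z^\perp=\Span(e_1,e_2,e_3,e_4)$, makes $\mathcal Z^\perp$ orthogonal to $Z$, and satisfies $\|Z\|_Z^2=(1+\xi)^2$; moreover $Z$ central gives $\nabla^Z_Z Z=0$, so the scheme of the previous paragraph collapses to a couple of lines. With the transverse vector $X=e_1$ one gets $R^Z(e_1,Z)Z=\tfrac14\lambda_1^2(1+\xi)^4 e_1$, hence $K(Z,e_1)=\tfrac14\lambda_1^2(1+\xi)^2>0$ --- the Randers analogue of Milnor's inequality $K(Z,X)\ge0$, strictly positive here and reducing to the Riemannian value $\tfrac14\lambda_1^2$ as $\xi\to0$.

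For the negative flag I would instead keep both the pole and the transverse vector inside $\mathcal Z^\perp$, say $W=e_1$ and $X=e_2$, so that $[W,X]=\lambda_1 e_5\ne0$; here $\langle X_0,W\rangle=\langle X_0,X\rangle=0$ trims \eqref{eq:scalar}--\eqref{eq:cartan} a little, but no central-direction shortcut is available and the connection $\nabla^{e_1}$ --- in particular $\nabla^{e_1}_{e_1}e_1$, which is nonzero with components along $e_2$ and $e_5$ --- must be worked out in full. Assembling $R^{e_1}(e_2,e_1)e_1$ and dividing by the positive denominator should give a rational function of $\xi,\lambda_1,\lambda_2$ that equals the Riemannian value $-\tfrac34\lambda_1^2$ at $\xi=0$ and stays strictly negative on $(0,1)\times(0,\infty)^2$; should this precise flag fail for some parameters, I would tilt the pole to $\cos\theta\,e_1+\sin\theta\,e_3$ and optimise $\theta$, or add a small $Z$-component to $X$. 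This last step is the main obstacle: off the center the Chern--Rund connection has to be carried through in full generality with three symbolic parameters, and then the sign of a fairly bulky rational expression must be certified over the whole parameter domain --- the natural place to back up the hand computation with a computer-algebra check.
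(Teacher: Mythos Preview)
Your strategy is the paper's: work in the Homolya--Kowalski basis, compute the Chern--Rund connection by the three-step Koszul scheme you describe, and exhibit one positive and one negative flag. A couple of slips in the positive case: from \eqref{eq:scalar} with $W=Z$ one gets $\langle U,V\rangle_Z=(1+\xi)\langle U,V\rangle$ on $\mathcal Z^\perp$, not the plain Euclidean product, and the flag curvature comes out $K(Z,e_1)=\lambda_1^2/4$, independent of $\xi$ (your extra factor $(1+\xi)^2$ cancels once the correct norms are inserted). The sign is of course unchanged.

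The real gap is the negative flag, which you leave as a programme (``should give'', ``should this precise flag fail''). In fact your first guess $(W,X)=(e_1,e_2)$ already works and no tilting or optimisation is needed. The paper carries it out for any unit $W\in\Span(e_1,e_2)$ by setting $W^\perp=\lambda_1 w_2 e_1-\lambda_1 w_1 e_2$; one finds $\nabla^W_WW=\xi W^\perp$ (purely in $\Span(e_1,e_2)$, contrary to your remark about an $e_5$-component), and a short table of connection values on $\{W,W^\perp,Z\}$ yields
\[
R^W(W^\perp,W)W=\tfrac14\lambda_1^2(\xi^2-3)\,W^\perp,\qquad
K(W,W^\perp)=\tfrac14\lambda_1^2(\xi^2-3)<0\ \text{for all }\xi\in(0,1).
\]
So the answer is a single clean polynomial in $\xi$ and $\lambda_1$, not a ``bulky rational expression'' in three parameters, and $\lambda_2$ never enters. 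The anticipated obstacle --- certifying a sign over the whole parameter domain --- does not arise; you just needed to push the computation one step further.
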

\begin{proof}
Table~\ref{tab:iouzgfv} outlines the proof, where 
we summarized the flag curvature of the $Z$-Randers metric for special flags $\sigma(W,X)$.
\begin{table}
\begin{tabular}{llll}
\toprule
&flag pole $W$&transverse vector $X$&$K(X,W)$\\[1ex]
&&($X\neq W$)&\\[1ex]
\midrule
(1.1)&$W\in\mathcal{Z}$&$X\in\Span(e_1,e_2)$&$\frac{\lambda^2}{4}>0$\\[1ex]
(1.2)&&$X\in\Span(e_3,e_4)$&$\frac{\mu^2}{4}>0$\\[1ex]
\midrule
(2.1)&$W\in\Span(e_1,e_2)$&$X\in\mathcal{Z}$&$\frac{1-\xi^2}{4}\lambda^2>0$\\[1ex]
(2.2)&                        &$X\in\Span(e_1,e_2)$&$\frac{\xi^2-3}{4}\lambda^2<0$\\[1ex]
(2.3)&                        &$X\in\Span(e_3,e_4)$&$\frac{\mu^2-\lambda^2}{4}\xi^2\leq0$\\[1ex]
                        \midrule
(3.1)&$W\in\Span(e_3,e_4)$&$X\in\mathcal{Z}$&$\frac{1-\xi^2}{4}\mu^2>0$\\[1ex]      
(3.2)&                        &$X\in\Span(e_1,e_2)$&$\frac{\lambda^2-\mu^2}{4}\xi^2\geq0$\\[1ex]
(3.3)&                        &$X\in\Span(e_3,e_4)$&$\frac{\xi^2-3}{4}\mu^2<0$\\[1ex]
\bottomrule
\end{tabular}
\caption{Flag curvatures for special flags}
\label{tab:iouzgfv}
\end{table}
In Table~\ref{tab:iouzgfv} we use the Homolya--Kowalski basis.
In \cite{MR2267683} the authors construct an orthonormal basis $(e_1,e_2,e_3,e_4,Z)$ in $\mathcal N$ such that
\begin{equation}
[e_1,e_2]=-[e_2,e_1]=\lambda Z,\ [e_3,e_4]=-[e_4,e_3]=\mu Z,\
\lambda\geq\mu>0,
\end{equation}
and all the other Lie brackets are zero. The inner product here is the Euclidean scalar product $\langle,\rangle$.

We calculate the Chern--Rund connection from the metric using method described in \cite[Theorem 3.10]{MR2132661} directly. Also, this method is used in \cite{MR3231513} for the three-dimensional case. In that paper a more detailed description of the algorithm can be found. We record in 
Tables~\ref{tab:ubi}--\ref{tab:wqidhf7} the explicit formul\ae{} for the Chern--Rund connection, restricted to the demand of the proof.

\paragraph{\bf Cases (1.1) and (1.2)} If $W\in\mathcal{Z}$ then \eqref{eq:cartan} gives $\langle{U},{V},{X}\rangle_Z=0$ for all $U$, $V$, $X$, and from \eqref{eq:scalar} we get
\begin{align*}
\langle{U},{V}\rangle_Z&=(1+\xi)\langle{U},{V}\rangle,\ U,V\in\mathcal{V}\\
\langle{U},{V}\rangle_Z&=0,\ U\in\mathcal{V},\ V\in\mathcal{Z}\\
\langle{U},{V}\rangle_Z&=(1+\xi)(\langle{U},{V}\rangle+
\xi\langle{Z},{U}\rangle\langle{Z},{V}\rangle),\ U,V\in\mathcal{Z}.
\end{align*}
The generalized Koszul formula \eqref{eq:CR} simplifies to
\begin{align}\label{eq:CRuzfuz}
2\langle{\nabla^Z_UV},{X}\rangle_Z&=
\langle{[U,V]},{X}\rangle_Z-
\langle{[V,X]},{U}\rangle_Z+
\langle{[X,U]},{V}\rangle_Z.
\end{align}
From \eqref{eq:CRuzfuz} it is easy to determine the components of the Chern--Rund connection in the basis $(e_1,e_2,e_3,e_4,e_5=Z/(\xi+1))$, see Table~\ref{tab:ubi}.
\begin{table}
\begin{tabular}{rrrrrr}
\toprule
&$e_1$&$e_2$&$e_3$&$e_4$&$e_5$\\[1ex]
\midrule
$\nabla_{e_1}$&0&$\frac{\lambda}{2}Z$&0&0&$-\frac{\lambda}{2}e_2$\\[1ex]
$\nabla_{e_2}$&$-\frac{\lambda}{2}Z$&0&0&0&$\frac{\lambda}{2}e_1$\\[1ex]
$\nabla_{e_3}$&0&0&0&$\frac{\mu}{2}Z$&$-\frac{\mu}{2}e_4$\\[1ex]
$\nabla_{e_4}$&0&0&$-\frac{\mu}{2}Z$&0&$\frac{\mu}{2}e_3$\\[1ex]
$\nabla_{e_5}$&$-\frac{\lambda}{2}e_2$&$\frac{\lambda}{2}e_1$&
$-\frac{\mu}{2}e_4$&$\frac{\mu}{2}e_3$&0\\[1ex]
\bottomrule
\end{tabular}
\caption{Local components of the Chern--Rund connection, $W=Z$}
\label{tab:ubi}
\end{table}
Substituting expressions from Table~\ref{tab:ubi} into \eqref{eq:flag} we get
\[
K(Z,e_1)=\frac{\lambda^2}{4},\
K(Z,e_2)=\frac{\lambda^2}{4},\
K(Z,e_3)=\frac{\mu^2}{4},\
K(Z,e_4)=\frac{\mu^2}{4},
\]
which gives cases (1.1) and (1.2).

In what follows, $W=w_1 e_1+w_2 e_2+w_3 e_3+ w_4 e_4\in\mathcal{V}$, i.e.\ $W$ is from the Euclidean orthogonal complement of the center.

From \eqref{eq:scalar} we get in this case
\begin{align}\
\langle X,Y\rangle_W&=\langle X,Y\rangle+\xi^2\langle Z,X\rangle\langle Z,Y\rangle\
(X,Y\in\mathcal{Z}),\ \text{spec. }\langle Z,Z\rangle_W=1+\xi^2\\
\langle X,Y\rangle_W&=\langle X,Y\rangle\ (X,Y\in\mathcal{V}),\ \text{spec. }\langle W,W\rangle_W=1\\
\langle X,Y\rangle_W&=\xi\langle Z,Y\rangle\langle W,X\rangle\
(X\in\mathcal{V},\ Y\in\mathcal{Z}),\ \text{spec. }\langle Z,W\rangle_W=\xi.\label{eq.uzgub}
\end{align}
Moreover, for the Cartan tensor we have
\begin{equation}\label{eq:cartan34}
\langle {U},{V},{X}\rangle_{W}=\frac{1}{2}\sum_{[U,V,X]}\left\{
-
\langle{X_0},{X}\rangle\langle{W},{V}\rangle\langle{W},{U}\rangle+
\langle{X_0},{U}\rangle\langle{X},{V}\rangle
\right\}.
\end{equation}

Applying the Gram\,--\,Schmidt process we get that
\begin{equation}
(e_1,e_2,e_3,e_4,e_5=Z-\xi W)
\end{equation}
is an orthonormal basis w.r.t.\ the osculating scalar product $\langle,\rangle_W$.

 Let
\begin{equation}\label{eq:kjjzvb}
W^\perp=
\lambda w_2e_1-\lambda w_1e_2+\mu w_4e_3-\mu w_3 e_4.
\end{equation}
For $W^\perp$ we have
\begin{equation}
\langle{W},{W^\perp}\rangle=\langle{W},{W^\perp}\rangle_W=0.
\end{equation}
Moreover, the generalized Koszul formula \eqref{eq:CR} becomes
\begin{align*}
2\langle{\nabla^W_WW},{e_i}\rangle_W&=
2\langle{[e_i,W]},{W}\rangle_W=
2\xi\langle{Z},{[e_i,W]}\rangle=
\begin{cases}
\phantom{-}2\xi w_2\lambda,& i=1\\
-2\xi w_1\lambda,& i=2\\
\phantom{-}2\xi w_4\mu,&i=3\\
-2\xi w_3\mu,&i=4.
\end{cases}
\end{align*}
Using \eqref{eq:CR} once more
\[
2\langle{\nabla_W^WW},{e_5}\rangle_W=2\langle{[e_5,W]},{W}\rangle_W=0,
\]
hence
\begin{equation}
\nabla^W_WW=\xi W^\perp.
\end{equation}

\paragraph{\bf Case (2.1).} Using Table~\ref{tab:kuhzvx} we get
\begin{align*}
R(Z,W)W&=\nabla^W_Z\nabla^W_WW-\nabla^W_W\nabla^W_ZW=\xi\nabla^W_ZW^\perp-\frac{1}{2}\nabla^W_WW^\perp\\
&=\frac{\lambda^2}{4}(\xi^2-1)\left(\xi W-Z\right).
\end{align*}
It follows that
\begin{align*}
K(W,Z)&=\frac{\langle R(Z,W)W,Z\rangle_W}{\|W\|_W\|Z\|_W-\langle Z,W\rangle_W^2}\\
&=\frac{\lambda^2}{4}\cdot\frac{\xi^2-1}{(1+\xi^2)-\xi^2}\left(\xi^2-(1+\xi^2)\right)=\frac{\lambda^2}{4}(1-\xi^2).
\end{align*}
\paragraph{\bf Case (2.2).}
While $\sigma(W,X)=\sigma(W,W^\perp)$ in this case, there is no loss of generality in choosing $W^\perp$ as the transverse edge. The required components of the Chern--Rund connection are computed from the generalized Koszul formula, and they are in Table~\ref{tab:kuhzvx}. 
\begin{align*}
R^W(W^\perp,W)W&=\nabla^W_{W^\perp}\nabla^W_WW-
\nabla^W_W\nabla^W_{W^\perp}W-\nabla^W_{[W^\perp,W]}W\\
&=\xi\nabla^W_{W^\perp}{W^\perp}
-\frac{1}{2}\lambda^2\left(\nabla^W_WZ-\xi\nabla^W_WW\right)-
\lambda^2\nabla^W_ZW
=-\frac{1}{4}\xi\lambda^2W^\perp,
\end{align*}
hence
\[
K(W,W^\perp)=\frac{1}{4}\lambda^2(\xi^2-3)\frac{\|W^\perp\|_W^2}%
{\|W\|_W^2\|W^\perp\|_W^2}=\frac{1}{4}\lambda^2(\xi^2-3).
\]
\begin{table}
\begin{tabular}{lrrr}
\toprule
&$W$&$W^\perp$&$Z$\\
\midrule
$\nabla_W^W$&$\xi W^\perp$&$-\frac{1}{2}\lambda^2(\xi W+Z)$&$\frac{1}{2}W^\perp$\\[1ex]
$\nabla^W_{W^\perp}$&$\frac{1}{2}\lambda^2(Z-\xi W)$&$-\frac{1}{4}\xi\lambda^2 W^\perp$&$\frac{1}{4}\lambda^2\left((\xi^2-2)W-\xi Z\right)$\\[1ex]
$\nabla^W_Z$&$\frac{1}{2}W^\perp$&$\frac{1}{4}\lambda^2\left((\xi^2-2)W-\xi Z\right)$&$\frac{1}{4}\xi W^\perp$\\[1ex]
\bottomrule
\end{tabular}
\caption{$W\in\Span(e_1,e_2)$}
\label{tab:kuhzvx}
\end{table}
\paragraph{\bf Case (2.3).}
\begin{table}
\begin{tabular}{rrr}
\toprule
&$W$&$W^\perp$\\[1ex]
\midrule
$\nabla^W_{e_3}$&$-\frac{1}{2}\mu\xi e_4$&$-\frac{1}{4}\xi\lambda^2e_3$\\[1ex]
$\nabla^W_{e_4}$&$\frac{1}{2}\mu\xi e_3$&$-\frac{1}{4}\xi\lambda^2e_4$\\[1ex]
\bottomrule
\end{tabular}
\caption{$W\in\Span(e_1,e_2)$}
\label{tab:wqidhf}
\end{table}
Since $[e_3,W]=0$ in this case,
\begin{align*}
R^W(e_3,W)W&=\nabla^W_{e_3}\nabla^W_WW-\nabla^W_W\nabla^W_{e_3}W\\
&=\xi\nabla^W_{e_3}W^\perp+\frac{1}{2}\xi\mu\nabla^W_We_4
=\frac{1}{4}\xi^2(\mu^2-\lambda^2)e_3.
\end{align*}
Consequently,
\[
K(W,e_3)=\frac{\langle R^W(e_3,W)W,e_3\rangle_W}%
{\|e_3\|_W\|W\|_W-\langle e_3,W\rangle_W}=\frac{1}{4}\xi^2(\mu^2-\lambda^2).
\]
Analogously, $R^W(e_4,W)W=\frac{1}{4}\xi^2(\mu^2-\lambda^2)e_4$ and $K(W,e_4)=\frac{1}{4}\xi^2(\mu^2-\lambda^2)$. Hence, for all $X\in\Span(e_3,e_4)$ we have $K(W,X)=\frac{1}{4}\xi^2(\mu^2-\lambda^2)$.

Proof of statements in the rows (3.1), (3.2) and (3.3) of 
Table~\ref{tab:iouzgfv} are completely analogous, and we give only the form of the Chern--Rund connection, see Tables~\ref{tab:uhzgzuh} and~\ref{tab:wqidhf7}. 
\end{proof}

\begin{table}[ht!]
\begin{tabular}{lrrr}
\toprule
&$W$&$W^\perp$&$Z$\\
\midrule
$\nabla_W^W$&$\xi W^\perp$&$-\frac{1}{2}\mu^2(\xi W+Z)$&$\frac{1}{2}W^\perp$\\[1ex]
$\nabla^W_{W^\perp}$&$\frac{1}{2}\mu^2(Z-\xi W)$&$-\frac{1}{4}\xi\mu^2 W^\perp$&$\frac{1}{4}\mu^2\left((\xi^2-2)W-\xi Z\right)$\\[1ex]
$\nabla^W_Z$&$\frac{1}{2}W^\perp$&$\frac{1}{4}\mu^2\left((\xi^2-2)W-\xi Z\right)$&$\frac{1}{4}\xi W^\perp$\\[1ex]
\bottomrule
\end{tabular}
\caption{$W\in\Span(e_3,e_4)$}
\label{tab:uhzgzuh}
\end{table}
\begin{table}[ht!]
\begin{tabular}{rrr}
\toprule
&$W$&$W^\perp$\\[1ex]
\midrule
$\nabla^W_{e_1}$&$-\frac{1}{2}\lambda\xi e_2$&$-\frac{1}{4}\xi\mu^2e_1$\\[1ex]
$\nabla^W_{e_2}$&$\frac{1}{2}\lambda\xi e_1$&$-\frac{1}{4}\xi\mu^2e_2$\\[1ex]
\bottomrule
\end{tabular}
\caption{$W\in\Span(e_3,e_4)$}
\label{tab:wqidhf7}
\end{table}

\newpage

\end{document}